\numberwithin{equation}{section}
\newcommand{\mConv}{\mathop{\rm mConv} }
\newcommand{\dist}{\mathop{\rm dist} }
\newtheorem{Thm}{Theorem}[section]
\newtheorem{Lem}[Thm]{Lemma}
\newtheorem{Cor}[Thm]{Corollary}
\theoremstyle{definition}
\newtheorem*{Clm}{Claim}
\title{Two flags in a semimodular lattice \\
generate an antimatroid}
\author{Koyo Hayashi\footnote{Department of Information and System Engineering, 
		Chuo University, Tokyo 112-8551, Japan.		
		\texttt{hayashi@ise.chuo-u.ac.jp}} \quad \quad
Hiroshi Hirai\footnote{
Department of Mathematical Informatics, 
Graduate School of Information Science and Technology,  
The University of Tokyo, Tokyo, 113-8656, Japan.
\texttt{hirai@mist.i.u-tokyo.ac.jp}}
}
\begin{document}
	\maketitle
	\begin{abstract}
		A basic property in a modular lattice is 
		that any two flags generate a distributive sublattice.
		It is shown (Abels 1991, Herscovic 1998)
		that two flags in a semimodular lattice no longer generate such a good sublattice, whereas shortest galleries connecting them
		form a relatively good join-sublattice.
		In this note,  
		we sharpen this investigation to establish an analogue of
		the two-flag generation theorem for a semimodular lattice.
		We consider the notion of 
		a modular convex subset,  which is a subset 
		closed under the join and meet only for modular pairs, 
		and show that the modular convex hull 
		of two flags in a semimodular lattice of rank $n$ is isomorphic 
		to a union-closed family on $[n]$. 
		This family uniquely determines an antimatroid,
		which coincides with the join-sublattice of shortest galleries of the two flags.
\end{abstract}
Keywords: semimodular lattice, antimatroid, Jordan-H\"older permutation, modular pair
\section{Introduction}

One of the basic and classical results in lattice theory 
is that 
any two maximal chains ({\em flags}) in a modular lattice generates 
a distributive sublattice:
\begin{Thm}[\cite{Birkhoff}]\label{thm:modular}
	Let $L$ be a modular lattice with a finite rank, and let $C, D$ be flags of $L$.
	Then the sublattice generated by $C \cup D$
	is a distributive lattice.
\end{Thm}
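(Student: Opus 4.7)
The plan is to proceed by induction on the rank $n$ of $L$. Since modular lattices of finite rank satisfy the Jordan-Dedekind chain condition, both flags have length $n$; write $C: 0 = c_0 \prec c_1 \prec \cdots \prec c_n = 1$ and $D: 0 = d_0 \prec d_1 \prec \cdots \prec d_n = 1$. The base cases $n \leq 1$ are trivial.

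For the inductive step, I aim to exhibit the generated sublattice $L' := \langle C \cup D \rangle$ as a homomorphic image of an explicit distributive lattice. Let $P := \{0, 1, \ldots, n\}^2$ with the componentwise order, and let $J(P)$ denote the distributive lattice of down-sets of $P$ under inclusion. Define $\phi : J(P) \to L$ by
\[
\phi(A) := \bigvee_{(i,j) \in A} (c_i \wedge d_j).
\]
Then $\phi$ is order-preserving and join-preserving, and its image $S := \phi(J(P))$ contains $C \cup D$ (since $c_i = \phi(\{(i',j') : i' \leq i\})$ and similarly for $d_j$) and is closed under $\vee$.

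The crux is to show that $\phi$ also preserves meets, whence $S$ is a sublattice equal to $L'$. I would argue this by induction on $|A|+|B|$, peeling off one generator at a time and invoking the modular identity $a \vee (b \wedge c) = (a \vee b) \wedge c$ (valid when $a \leq c$). The base step reduces to the evident equality $(c_i \wedge d_j) \wedge (c_p \wedge d_q) = c_{\min(i,p)} \wedge d_{\min(j,q)}$, which is a single generator of $S$. Once $\phi$ is verified to be a lattice homomorphism, $L' = S$ is a homomorphic image of the distributive lattice $J(P)$ and hence is itself distributive.

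The principal obstacle is precisely this meet-preservation: each invocation of the modular law requires a comparability hypothesis, and marshalling these hypotheses---by repeatedly using that each generator $c_i \wedge d_j$ lies below both $c_i$ and $d_j$, which in turn are comparable within their respective chains---is the combinatorial heart of the proof. This is exactly the step that collapses in the merely semimodular setting, and its failure motivates the authors' subsequent replacement of the meet by the weaker \emph{modular convex} operation.
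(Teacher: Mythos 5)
The paper itself gives no proof of Theorem~\ref{thm:modular}---it is quoted from Birkhoff---so your attempt can only be measured against the classical argument, and your architecture is indeed that classical argument: realize the sublattice generated by the two chains as the image of the distributive lattice $J(P)$ of down-sets of the grid $P=\{0,\dots,n\}^2$ under $\phi(A)=\bigvee_{(i,j)\in A}(c_i\wedge d_j)$, verify that $\phi$ preserves joins and meets, and conclude that the generated sublattice, being a homomorphic image of a distributive lattice, is distributive. The easy parts of what you assert are correct: $\phi$ is join-preserving because joins of down-sets are unions, its image contains $C\cup D$, and once meet-preservation is known the image is closed under both operations, hence equals $\langle C\cup D\rangle$, and distributivity is inherited by homomorphic images.

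The genuine gap is that the one step you yourself flag as the crux---$\phi(A)\wedge\phi(B)=\phi(A\cap B)$---is not proved, and the strategy you offer for it does not go through as stated. ``Peeling off one generator'' means writing $\phi(B)=\phi(B')\vee(c_p\wedge d_q)$ for a maximal $(p,q)\in B$ and computing $\phi(A)\wedge\bigl(\phi(B')\vee(c_p\wedge d_q)\bigr)$ by the modular law; but the modular law $(a\vee b)\wedge c=a\vee(b\wedge c)$ requires $a\le c$, and in general neither $\phi(B')$ nor $c_p\wedge d_q$ is comparable with $\phi(A)$, so no single application of modularity is available. Your ``base step'' $(c_i\wedge d_j)\wedge(c_p\wedge d_q)=c_{\min(i,p)}\wedge d_{\min(j,q)}$ holds in every lattice and carries none of the difficulty. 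The standard ways to close the gap are: (i) first prove the sublemma $\phi(A)\wedge c_p=\phi(A\cap([0,p]\times[0,n]))$---here modularity genuinely applies, by splitting $\phi(A)$ at the last maximal generator whose first coordinate is at most $p$, the lower part being below $c_p$ and the upper part being below a single $d_j$---and its companion for $d_q$, which settles meets of $\phi(A)$ with single generators and feeds a further induction; or (ii) show by induction, again via the modular law, that $\phi(A)$ also admits a dual normal form as a meet of elements of the form $c_i\vee d_j$ indexed by the complement of $A$, after which meet-preservation is immediate since intersecting down-sets corresponds to uniting complements. Without carrying out one of these (or an equivalent), what you have is an accurate outline of the classical proof rather than a proof; the concluding remark that this is the step that fails under mere semimodularity is correct and is precisely why the paper replaces unrestricted meets by meets of modular pairs.
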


The present note addresses an extension of this theorem 
for a semimodular lattice. 
The distributive sublattice in the above theorem
can be interpreted in terms of the graph of all flags, 
which is obtained by joining two flags differing on exactly one element. 
A path in this graph is usually referred to as a {\em gallery}. 
Abels~\cite{AbelsA,AbelsB}
studied the gallery-distance of 
two flags in a semimodular lattice, and showed that 
for the case of a modular lattice, 
the union of all shortest galleries between two flags coincides with the distributive sublattice generated by them. 
He also showed that in a semimodular lattice, 
the elements in shortest galleries between two flags
form a join-closed subset of the original lattice. 
This direction was further addressed by Herscovic~\cite{Herscovici98} in terms of reduced decompositions of the Jordan-H\"older permutation.
He showed that the sublattice generated by two flags in a semimodular lattice no longer has good properties; 
it needs not to be ranked and is not known to be finite.  
He claims that in a semimodular lattice
the join-closed subset of shortest galleries of two flags
is a natural correspondent for  
the sublattice of two flags 
in a modular lattice.

In this note, we sharpen these investigations to establish an analogue of Theorem~\ref{thm:modular} for a semimodular lattice.
The above ``generation by 
two flags" 
is meant as repeated additions 
of join and meet of pairs from the elements in the flags.
We here consider a restricted generation by join and meet only for {\em modular pairs}---pairs of elements satisfying the modularity equality for their rank.
We introduce a {\em modular convex subset} 
as a subset that is closed under join and meet for modular pairs. 
The main result in this note is 
an extension of Theorem~\ref{thm:modular}
in terms of the modular convex hull, 
which clarifies the above join-closed subset of two flags 
and establishes a link to antimatroids.
Specifically, we show that in a semimodular lattice with rank $n$ the modular convex hull 
of two flags is isomorphic, as a poset, to a union-closed family ${\cal A} \subseteq 2^{[n]}$ having a (maximal) chain of length $n$. 
Such a union-closed family is referred here to as a {\em pre-antimatroid}, 
since it uniquely determines an {\em antimatroid}~\cite{Edelman,EdelmanJamison,Greedoid} as the union of all maximal chains.
This construction of an antimatroid 
from a union-closed family was 
implicit in \cite{Greedoid}.
Its importance was recently recognized by~\cite{Doignon,Yoshikawa} 
in the literature of {\em Knowledge Space Theory (KST)}~\cite{LearningSpace}.
We show that this antimatroid coincides with the above join-closed subset of shortest galleries between the two flags.

Various classes of antimatroids and semimodular lattices are known. 
It would be an interesting future research  
to study how they are related via the modular convex hull of two flags.
Our proof is constructive, and will be useful for such study.

\section{Preliminaries}
A {\em lattice} is a partially ordered set $L$ 
in which every pair $p,q$ of elements has join 
$p \vee q$ (the minimum common upper bound) 
and meet $p \wedge q$ (the maximum common lower bound). 
The partial order is denoted by $\leq$. 
By $p < q$ we mean $p \leq q$ and $p \neq q$. 
For a pair $p,q$ with $p \leq q$, the interval $[p,q]$ is defined as the set of elements $z$ with $p \leq z \leq q$.
We say that $q$ covers $p$ if $p \neq q$ and $[p,q]=\{p,q\}$.
A totally ordered subset is called a {\em chain}, which will be denoted simply as $p_1 < p_2 < \cdots$.  
The length of a chain is its cardinality minus one.
A subset $S$ of a lattice is said to be {\em join-closed} if $p,q \in S \Rightarrow p \vee q \in S$. (Such a subset is called a {\em subsemilattice} in \cite{AbelsB} and {\em join-sublattice} in \cite{Herscovici98}.)
Two posets $P,Q$ are said to be {\em isomorphic} 
if there is an order-preserving bijection from $P$ to $Q$.
An order-preserving injection from $P$ to $Q$ is called an {\em embedding}.  

A lattice $L$ considered in this paper 
has the maximum element and minimum element, 
which are denoted by $1$ and $0$. 
The {\em rank} of an element $p$ is defined 
as the maximum length of a chain from $0$ to $p$, 
and is denoted by $r(p)$.
The rank of the lattice $L$ is defined as the rank of $1$.
We only consider lattices with a finite rank.

For an positive integer $n$, let $[n] := \{1,2,\ldots,n\}$.
The family $2^{[n]}$ of all subsets of $[n]$ is viewed as 
a (Boolean) lattice by the inclusion order.
A subfamily ${\cal F} \subseteq 2^{[n]}$ 
is regarded as a poset with respect to this order.

An (upper-){\em semimodular} lattice is 
a lattice $L$ satisfying:
\begin{itemize}
	\item	if $a$ covers $a \wedge b$, then $a \vee b$ covers $b$.
\end{itemize}
It is known \cite[Theorem 375]{Gratzer} 
that a semimodular lattice is precisely 
a lattice whose rank function $r$ satisfies the semimodularity inequality: 
\begin{equation}\label{eqn:semimodular}
r(p) + r(q) \geq r(p \wedge q) + r(p \vee q) \quad (p,q \in L).
\end{equation}
A pair $(p,q)$ of elements is said to be {\em modular} 
if it holds $(x \vee p) \wedge q = x \vee (p \wedge q)$
for every element $x \in L$ with $x \leq q$. 
In a semimodular lattice, modular pairs $(p,q)$ are precisely those pairs which satisfies (\ref{eqn:semimodular}) in equality; see \cite[Theorem 381]{Gratzer}.
In particular, a pair $(p,q)$ is modular if and only if $(q,p)$ is modular (which is called M-symmetry).

An {\em antimatroid} on a finite set $E$ 
is a family ${\cal A} \subseteq 2^E$ of subsets
satisfying the following properties:
\begin{itemize}
	\item[(A1)] $\emptyset, E \in {\cal A}$.
	\item[(A2)] For $X, Y \in {\cal A}$, it holds $X \cup Y \in {\cal A}$.  
	\item[(A3)] For a nonempty set $X \in {\cal A}$, there is $e \in X$ 
	such that $X \setminus \{e\} \in {\cal A}$. 
\end{itemize}
See e.g., \cite{ConvexGeometry,Greedoid} for antimatroids.
An antimatroid is a semimodular lattice with respect to inclusion order $\subseteq$. 
Indeed, $X \vee Y$ equals $X \cup Y$, 
and $X \wedge Y$ equals the union of all members contained in $X \cap Y$.
The rank is given by the cardinality $X \mapsto |X|$, 
from which the semimodularity of the rank follows.
It is known~\cite{Edelman} that an antimatroid is precisely a realization of
a special semimodular lattice, called a {\em join-distributive lattice}, in $2^{E}$.

We introduce a weaker notion of an antimatroid
that determines an antimatroid uniquely. 
A {\em pre-antimatroid} on a finite set  $E$ 
is a family ${\cal K} \subseteq 2^E$ of subsets satisfying (A1), (A2), and 
\begin{itemize}
	\item[(A3$'$)]  There is a (maximal) chain of length $|E|$.
\end{itemize}
For a pre-antimatroid ${\cal K}$, let ${\cal K}^* \subseteq {\cal K}$
be defined as the union of all chains of length $|E|$.
\begin{Lem}[{\cite{Doignon}; see \cite{Yoshikawa}}]
	Let ${\cal K}$ be a pre-antimatroid. 
	Then ${\cal K}^*$ is an antimatroid.
\end{Lem}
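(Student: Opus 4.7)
The plan is to verify the three antimatroid axioms (A1), (A2), (A3) for ${\cal K}^*$, writing $n := |E|$. First I would observe that any chain $Z_0 \subsetneq Z_1 \subsetneq \cdots \subsetneq Z_n$ of length $n$ in the Boolean lattice $2^E$ must satisfy $|Z_j| = j$, hence $Z_0 = \emptyset$ and $Z_n = E$. By (A3$'$) such a chain exists in ${\cal K}$, so $\emptyset, E \in {\cal K}^*$, giving (A1). For (A3), given a nonempty $X \in {\cal K}^*$, I would fix a saturated chain $\emptyset = X_0 \subsetneq X_1 \subsetneq \cdots \subsetneq X_n = E$ in ${\cal K}$ with $X = X_k$; the predecessor $X_{k-1}$ lies on the same maximal chain, so $X_{k-1} \in {\cal K}^*$ and $X_{k-1} = X \setminus \{e\}$ for the unique $e \in X_k \setminus X_{k-1}$.

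The main step is (A2). To show $X \cup Y \in {\cal K}^*$ for $X, Y \in {\cal K}^*$, my plan is to fix saturated chains $\emptyset = X_0 \subsetneq \cdots \subsetneq X_n = E$ through $X = X_k$ and $\emptyset = Y_0 \subsetneq \cdots \subsetneq Y_n = E$ through $Y = Y_\ell$, both inside ${\cal K}$, and then set $Z_j := X \cup Y_j$ for $j = 0, 1, \ldots, n$. By (A2), each $Z_j$ lies in ${\cal K}$, with $Z_0 = X$, $Z_n = E$, and $Z_\ell = X \cup Y$. The sequence is weakly increasing and $|Z_{j+1}| - |Z_j| \in \{0, 1\}$, so after discarding duplicates the distinct $Z_j$'s form a saturated chain from $X$ to $E$ in ${\cal K}$ that passes through $X \cup Y$. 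Concatenating this upper chain with the prefix $X_0 \subsetneq \cdots \subsetneq X_k = X$ then produces a saturated chain of length $n$ in ${\cal K}$ containing $X \cup Y$, certifying $X \cup Y \in {\cal K}^*$.

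I do not anticipate a real obstacle; the argument is fully constructive. The only subtle point is the length accounting in (A2): the concatenated chain must attain the maximum length $n$, which is forced by the telescoping identity $\sum_{j=0}^{n-1}(|Z_{j+1}|-|Z_j|) = |E|-|X|$ together with each summand being $0$ or $1$, so exactly $n - |X|$ of the steps are strict.
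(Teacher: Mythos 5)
Your proof is correct and takes essentially the same approach as the paper: the paper likewise reduces the matter to (A2) and produces the required maximal chain through $X \cup Y$ inside the union-closed set $\{X' \cup Y' \mid X' \in {\cal X},\, Y' \in {\cal Y}\}$ built from maximal chains ${\cal X} \ni X$ and ${\cal Y} \ni Y$, of which your concatenation of $X_0 \subsetneq \cdots \subsetneq X_k = X$ with the deduplicated sequence $X \cup Y_0, X \cup Y_1, \ldots, X \cup Y_n$ is exactly the explicit instance. You merely spell out the length accounting and the (A1)/(A3) checks that the paper leaves implicit.
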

\begin{proof}
	It suffices to verify that ${\cal K}^*$ satisfies the union-closedness (A2).
	Indeed, 
	if ${\cal X}, {\cal Y}$ are chains of length $|E|$, 
	$X \in {\cal X}$, and $Y \in {\cal Y}$,
	then $\{X' \cup Y' \mid  X' \in {\cal X},Y' \in {\cal Y}\} \subseteq {\cal K}$ contains a chain of length $|E|$ having $X \cup Y$.
\end{proof}

Consider the family of all flags in a semimodular lattice of rank $n$, 
and endow it with a graph structure as: Two flags $C,D$ are adjacent 
if and only if $C \cap D = n-1$. 
A path in this graph is called a {\em gallery}.
The {\em flag-distance} $\dist (C,D)$ of two flags $C,D$ is 
the minimum length of a shortest gallery 
between $C$ and $D$.
It is known \cite{AbelsA} that the flag-distance
can be computed from Jordan-H\"older permutation;
see Theorem~\ref{thm:inversion} below.
\section{Result}\label{sec:result}

Let $L$ be a semimodular lattice with rank $n$. 
A subset $S \subseteq L$ is said to be {\em modular convex} if for every modular pair of elements $p,q \in S$ 
it holds $p \vee q, p \wedge q \in S$.
The {\em modular convex hull} of $S$, denoted by $\mConv S$, is defined as the minimal modular convex set containing $S$.
Restricting the partial order of $L$, 
the modular convex hull $\mConv S$ is 
regarded as a subposet of $L$. 
It is clear that in a modular lattice $\mConv S$ is equal to 
the sublattice generated by $S$.

The main result of this note clarifies the relationship among
the modular convex hull 
of two flags, their flag-distance interval,  
and an antimatroidal structure.
\begin{Thm}\label{thm:main}
	Let $C, D$ be flags of $L$.
	\begin{itemize}
		\item[\rm (1)]  $\mConv (C \cup D)$ 
		is isomorphic to a pre-antimatroid ${\cal K}$ on $[n]$.
		\item[\rm (2)] A flag $F$  belongs to 
		$\mConv( C \cup D)$
		if and only if $F$ belongs to a shortest gallery between $C$ and $D$. In particular, 
		the union of all flags of all shortest galleries between $C$ and $D$
		is isomorphic to the antimatroid ${\cal K}^*$.
	\end{itemize}
\end{Thm}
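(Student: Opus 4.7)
\medskip
\noindent\textbf{Proof plan.}
My approach is to label each element of $\mConv(C\cup D)$ by a subset of $[n]$ via a Jordan--H\"older style labeling along the flag $C$, and to show simultaneously that this labeling is an order embedding, that its image is a pre-antimatroid, and that it realizes the shortest-gallery flags as the maximal chains of length $n$.

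Write $C\colon 0=c_0\lessdot c_1\lessdot\cdots\lessdot c_n=1$ and $D\colon 0=d_0\lessdot\cdots\lessdot d_n=1$. For $p\in L$ set
\[
\phi(p)\;=\;\{\,i\in[n]:r(p\wedge c_i)>r(p\wedge c_{i-1})\,\},
\]
which is order preserving, satisfies $\phi(c_i)=[i]$, and has size $r(p)$ by semimodularity applied to the cover $c_{i-1}\lessdot c_i$ and $p$. The technical heart of the proof is the identity
\[
\phi(p\vee q)=\phi(p)\cup\phi(q),\qquad \phi(p\wedge q)=\phi(p)\cap\phi(q),
\]
valid whenever $(p,q)$ is a modular pair; this I would derive from the rank equality $r(p)+r(q)=r(p\wedge q)+r(p\vee q)$ characterizing modular pairs together with the semimodular inequality applied to intersections with each $c_i$. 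Since $\mConv(C\cup D)$ is generated from $C\cup D$ by iterated modular joins and meets, the image $\mathcal K:=\phi(\mConv(C\cup D))$ is therefore union- and intersection-closed, contains $\emptyset,[n]$ and the chain $\emptyset\subset[1]\subset\cdots\subset[n]$, and hence is a pre-antimatroid, giving (A1), (A2), (A3$'$).

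For injectivity of $\phi$ on $\mConv(C\cup D)$, I would run the same construction in parallel for the labeling $\psi$ defined analogously from $D$; the pair $(\phi(p),\psi(p))$ determines $p$ uniquely, because $\psi$ is obtained from $\phi$ by the Jordan--H\"older permutation of $(C,D)$ and the two labelings together carry enough information to reconstruct $p$ by tracking the same inductive construction. Combined with the previous step this proves (1).

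For (2), a flag $F\subseteq\mConv(C\cup D)$ transports via $\phi$ to a maximal chain of length $n$ in $\mathcal K$, so $\phi(F)\in\mathcal K^*$. Conversely, each shortest gallery between $C$ and $D$ can be built from a reduced word for the Jordan--H\"older permutation in adjacent transpositions, and each elementary flip corresponds to a modular-pair operation inside $\mConv(C\cup D)$, so every intermediate flag lies in the hull; matching the gallery length with the inversion count for the Jordan--H\"older permutation via Theorem~\ref{thm:inversion} closes the equivalence. The step I expect to be the main obstacle is showing that arbitrary unions inside $\mathcal K$ can be realized by only the \emph{restricted} modular-pair operations, which is much stronger than closure under all joins/meets; this will likely require an explicit constructive induction on a progress measure such as $|\phi(p)\triangle\phi(q)|$, producing at each step a modular intermediate element already living in the hull.
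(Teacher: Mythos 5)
Your proposal has a genuine gap at its foundation: the labeling $\phi(p)=\{i\in[n]: r(p\wedge c_i)>r(p\wedge c_{i-1})\}$ does \emph{not} satisfy $|\phi(p)|=r(p)$ in an upper semimodular lattice. Semimodularity controls rank jumps along \emph{joins} with a cover ($r(p\vee c_i)-r(p\vee c_{i-1})\in\{0,1\}$), not along meets; the meet-based statement is the \emph{lower}-semimodular dual. Concretely, in the rank-$3$ geometric lattice of an affine plane take $c_1$ a point $P$, $c_2$ a line $\ell$ through $P$, $c_3=1$, and $p$ a line parallel to $\ell$: then $r(p\wedge c_2)=0$ while $r(p\wedge c_3)=2$, so $|\phi(p)|=1<2=r(p)$. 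This breaks the chain-length property (A3$'$), the identification of flags with maximal chains in part (2), and casts the unproven ``technical heart'' ($\phi(p\vee q)=\phi(p)\cup\phi(q)$, $\phi(p\wedge q)=\phi(p)\cap\phi(q)$ for modular pairs) into serious doubt. The paper instead uses a join-based labeling $\varphi(u)=\{i: z_i\geq u\}$ built from the specific elements $z_i=c_{\sigma(i)-1}\vee d_{i-1}$, for which $|\varphi(u)|=n-r(u)$, and then passes to complements.

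Beyond the labeling, two essential pieces are missing rather than merely sketched. First, union-closedness of the image (axiom (A2)) does not follow from ``$\mConv(C\cup D)$ is generated by modular joins and meets''; you yourself flag this as the main obstacle but do not resolve it. The paper's real work is exactly here: a recursion that contracts $d_1$, showing $\mConv(C\cup D)=\mConv(C'\cup D')\cup\Delta$ with $C'=C\vee d_1$, $D'=D\vee d_1$, and proving join-closedness by induction (Lemma~\ref{lem:1}); this recursion also yields injectivity for free, since every element of the hull becomes $\bigwedge_{i\in\varphi(u)}z_i$ and is thus determined by its label --- your alternative via a second labeling $\psi$ and the claim that $(\phi(p),\psi(p))$ ``carries enough information to reconstruct $p$'' is unsupported. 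Second, for part (2) the forward inclusion (shortest-gallery flags lie in the hull) needs the explicit adjacency computation exhibiting $d_i'$ as the meet of a modular pair of elements already in the hull, and the converse needs the embedding into $2^{[n]}$ together with Abels' characterization of shortest-gallery flags there; neither appears in your outline in a form that could be completed without essentially redoing the paper's construction.
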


We start the proof.
Suppose that $C$ and $D$ are given by $0 = c_0 < c_1 < \cdots < c_n = 1$
and  
$0 = d_0 < d_1 < \cdots < d_n = 1$, respectively.
Consider $L' := \{ p \in L \mid p \geq d_1\}$, 
which is a semimodular sublattice with rank $n-1$.
Define flags $C',D'$ of $L'$ by
$C' := C \vee d_1 = \{c_i \vee d_1 \}_{i=0,1,\ldots,n}$ and 
$D' := D \vee d_1 = \{d_i \}_{i=1,\ldots,n}$.
By semimodularity, $C'$ is actually 
a flag of $L'$.
Let $k$ be the maximum index for which $c_k \not \geq d_1$.
Then $C'$ is written as 
\[
d_1 = c_0 \vee d_1 < c_1 \vee d_1 < \cdots <
c_k \vee d_1 = c_{k+1} < \cdots < c_n = 1.
\]
Define $\Delta \subseteq [0, c_k]$ by
\begin{equation}\label{eqn:Delta}
\Delta := \{ c_k \wedge q \mid q \in [d_1, c_{k+1}] \cap \mConv (C' \cup D'): \mbox{$(c_k,q)$ is a modular pair}\}.
\end{equation}
\begin{Lem}\label{lem:1}
	$\mConv (C \cup D)$ equals $\mConv (C' \cup  D') \cup \Delta$, and is join-closed.
\end{Lem}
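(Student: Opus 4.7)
The plan is to prove both assertions simultaneously by induction on the rank $n$ of $L$, invoking the inductive hypothesis for the flags $C', D'$ of the semimodular sublattice $L'$ of rank $n-1$ to supply join-closedness of $\mConv (C' \cup D')$. Write $M := \mConv (C' \cup D') \cup \Delta$; the goal is to establish $M = \mConv (C \cup D)$ together with join-closedness of $M$. Two auxiliary facts will be used throughout: since $d_1$ is an atom, $(x, d_1)$ is always a modular pair in $L$ (semimodularity forces $x \vee d_1$ to cover $x$ whenever $x \wedge d_1 = 0$); and since the rank in $L'$ equals the rank in $L$ minus $1$ on all of $L'$, a pair of elements of $L'$ is modular in $L$ if and only if it is modular in $L'$, so $\mConv (C' \cup D')$ (which clearly stays inside $L'$ under its defining operations) coincides whether computed in $L$ or in $L'$.

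The easy inclusion $M \subseteq \mConv (C \cup D)$ is immediate from the first auxiliary fact: $C' = \{c_i \vee d_1\} \subseteq \mConv (C \cup D)$, hence $\mConv (C' \cup D') \subseteq \mConv (C \cup D)$, and each element of $\Delta$ is by definition a modular meet of $c_k \in C$ with some $q \in \mConv (C' \cup D')$. For the reverse containment $C \cup D \subseteq M$: elements $d_i$ with $i \geq 1$ lie in $D'$, and $0 \in \Delta$ via the choice $q = d_1$; elements $c_i$ with $i > k$ lie in $C'$; and for $c_i$ with $i \leq k$, taking $q := c_i \vee d_1 \in C'$, a short rank computation using $c_k \vee d_1 = c_{k+1}$ shows that $(c_k, c_i \vee d_1)$ is a modular pair with meet exactly $c_i$, placing $c_i \in \Delta$.

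The core step, and the main obstacle, is to show that $M$ is modular convex and in fact join-closed for every pair; together with the easy inclusion this yields $\mConv (C \cup D) = M$ and the join-closedness claim. A useful preparatory observation is that each $p \in \Delta$ has a canonical witness $q_p := p \vee d_1 \in [d_1, c_{k+1}] \cap \mConv (C' \cup D')$ with $(c_k, q_p)$ modular and $c_k \wedge q_p = p$: from $p \leq c_k$ one obtains $p \wedge d_1 = 0$, so $r(p \vee d_1) = r(p) + 1$ equals $r(q)$ for any witness $q$ of $p$, forcing $p \vee d_1 = q$. Given $p, q \in M$, I then split into three cases. Case A ($p, q \in \mConv (C' \cup D')$) reduces directly to the inductive hypothesis applied inside $L'$. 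Case B ($p, q \in \Delta$): rank arguments using $c_k \vee q_p = c_k \vee q_q = c_{k+1}$ give $p \vee q = c_k \wedge (q_p \vee q_q)$ and, when $(p, q)$ is modular, $p \wedge q = c_k \wedge (q_p \wedge q_q)$; the inductive hypothesis places $q_p \vee q_q$ (and, via a rank argument upgrading modularity of $(p, q)$ to modularity of $(q_p, q_q)$, also $q_p \wedge q_q$) into $\mConv (C' \cup D')$, and one last rank check puts the outcome in $\Delta$. Case C ($p \in \mConv (C' \cup D')$, $q \in \Delta$): from $p \geq d_1$ we have $p \vee q = p \vee q_q \in \mConv (C' \cup D')$ by the inductive hypothesis; for meets with $(p, q)$ modular, rank accounting upgrades modularity of $(p, q)$ to modularity of $(p, q_q)$, yielding $p \wedge q_q = (p \wedge q) \vee d_1 \in \mConv (C' \cup D')$, which then witnesses $p \wedge q \in \Delta$. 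The main difficulty running through Cases B and C is this recurrent modularity transfer between $L$ and $L'$ (upgrading $(p,q)$ to $(q_p, q_q)$ or to $(p, q_q)$), handled uniformly by the rank identities associated with $d_1$ being an atom disjoint from $c_k$.
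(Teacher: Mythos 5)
Your proposal is correct and takes essentially the same route as the paper's proof: induction on the rank, the two-sided inclusion $\mConv(C\cup D)\supseteq \mConv(C'\cup D')\cup\Delta\supseteq C\cup D$, the canonical witness $q_p=p\vee d_1$ for elements of $\Delta$, and the same case split with modularity transferred between $(p,q)$ and $(p\vee d_1,q\vee d_1)$ (or $(p,q_q)$) via rank/covering arguments. The only cosmetic difference is that you phrase the covering steps as explicit rank identities, which is equivalent in a semimodular lattice.
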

\begin{proof}
	Since $(d_1, c_i)$ is a modular pair,
	it holds $C' \subseteq \mConv (C \cup D)$ and $\mConv ({C}' \cup {D}') \subseteq \mConv ({C} \cup {D})$. 
	By definition (\ref{eqn:Delta}), it holds $\mConv ({C} \cup {D}) \supseteq \mConv ({C}' \cup {D}') \cup \Delta$.
	Also $(c_k, c_i \vee d_1)$ $(0 \leq i \leq k)$ is a modular pair 
	with $c_k \wedge (c_i \vee d_1) = c_i$.
	This implies that $\mConv ({C}' \cup {D}') \cup \Delta$ contains both ${C}$ and ${D}$.
	Therefore we have
	\begin{equation}\label{eqn:inclusion}
	\mConv ({C} \cup {D}) \supseteq \mConv ({C}' \cup {D}') \cup \Delta \supseteq {C} \cup {D}.
	\end{equation}
	We show:
	\begin{Clm}
		If $\mConv (C' \cup D')$ is join-closed and modular convex, then so is 
		$\mConv (C' \cup  D') \cup \Delta$. 
    \end{Clm}
	Then, by (\ref{eqn:inclusion}) and induction, 
	we have the statement.
	Take any $u,v \in \mConv ({C}' \cup {D}') \cup \Delta$. 
	Our goal is to show $u \vee v \in \mConv ({C}' \cup {D}') \cup \Delta$, 
	and $u \wedge v \in \mConv ({C}' \cup {D}') \cup \Delta$ if $(u,v)$ is modular.
	
	Case 1: $u \in \mConv ({C}' \cup {D}'), v \in \Delta$.
	Write $v$ by $v = q \wedge c_k$ as in (\ref{eqn:Delta}).
	Since $q$ covers $v$ (by modularity), it must hold $q = d_1 \vee v$.
	Then $u \vee v =  u \vee d_1 \vee v = u \vee q$.
	Therefore, if $\mConv ({C}' \cup {D}')$ is join-closed, then $u \vee v \in \mConv ({C}' \cup {D}')$.
	Suppose that $(u,v)$ is a modular pair.
	Then $(u,q)$ is also a modular pair.
	Indeed, $u \wedge q$ covers $u \wedge v$ or equals $u \wedge v$; the latter is impossible by $u \wedge q \geq d_1 \not \leq u \wedge v$.
	Therefore $u \vee q  = u \vee v \in \mConv ({C}' \cup {D}')$,  
	and $u \wedge q \in [d_1, c_{k+1}] \cap \mConv ({C}' \cup {D}')$.
	Also $(u \wedge q, c_k)$ is 
	a modular pair with $u \wedge q \wedge c_k = u \wedge v$.
	Thus $u \wedge v \in \Delta$.
	
	Case 2: $u,v \in \Delta$.
	Note that $z \mapsto d_1 \vee z$ 
	is an embedding from $\Delta$ to $[d_1, c_{k+1}] \cap \mConv ({C}' \cup {D}')$ 
	such that $c_k \wedge (d_1 \vee z) = z$; see \cite[Lemma 384]{Gratzer}.
	Note that $u \vee d_1, v \vee d_1 \in [d_1, c_{k+1}] \cap \mConv ({C}' \cup {D}')$ 
	with $u = (u \vee d_1) \wedge c_k$ and $v = (v \vee d_1) \wedge c_k$. 
	Suppose that $\mConv ({C}' \cup {D}')$ is join-closed.
	Then $(d_1 \vee u) \vee (d_1 \vee v) (= d_1 \vee (u \vee v))$ exists in 
	$[d_1, c_{k+1}] \cap \mConv (C' \cup D')$, and necessarily covers $u \vee v$.
	Thus $((d_1 \vee u) \vee (d_1 \vee v), c_k)$ is a modular pair with $((d_1 \vee u) \vee (d_1 \vee v)) \wedge c_k = u \vee v \in \Delta$.
	Suppose in addition that 
	$(u,v)$ is a modular pair. 
	Then $(u \vee d_1, v \vee d_1)$ is also modular.
	From $u \vee d_1, v \vee d_1 \in \mConv ({C}' \cup {D}')$, it holds 
	$(u \vee d_1) \wedge (v \vee d_1)$ belongs to  
	$[d_1,c_{k+1}] \cap \mConv ({C}' \cup {D}')$, and covers $u \wedge v$.
	This means that $((u \vee d_1) \wedge (v \vee d_1), c_k)$ 
	is a modular pair with	$(u \vee d_1) \wedge (v \vee d_1) \wedge c_k = u \wedge v$, implying $u \wedge v \in \Delta$.
\end{proof}

By using this lemma, we obtain 
a simple recursive algorithm  
to construct $\mConv ({C} \cup {D})$. 
This recursive algorithm can be naturally described 
by using the notion of the Jordan-H\"older permutation.
For $i \in [n]$, let $\sigma(i) = \sigma_{C,D}(i)$ denote 
the smallest index $j$ such that 
$d_{i} \leq d_{i-1} \vee c_j$.
The map $i \mapsto \sigma(i)$ 
is in fact a bijection on $[n]$, and 
is called the {\em Jordan-H\"older permutation}
with respect to ${C}, {D}$~\cite{Stanley72,Stanley74}. 
Observe that the above index $k$ 
is nothing but $\sigma(1) - 1$.

For $i=1,2,\ldots,n$, 
let $z_i, z_i'$ be elements defined by
\begin{equation}
	z_i' := c_{\sigma(i)} \vee d_{i} (= c_{\sigma(i)} \vee d_{i-1}),  \quad 
	z_{i} := c_{\sigma(i)-1} \vee d_{i-1}.
\end{equation}
Now $z_i$ is the maximal element of 
${C} \vee d_{i-1}$ not greater than $d_i$.
Therefore, the argument 
of the above lemma is applicable to two flags 
${C} \vee d_{i-1}, {D} \vee d_{i-1}$ in $[d_{i-1}, 1]$, and obtains 
$\mConv (({C} \vee d_{i-1}) \cup ({D} \vee d_{i-1}))$ from $\mConv (({C} \vee d_{i}) \cup ({D} \vee d_{i}))$.
Define $M_i \subseteq L$ ($i=n,n-1,\ldots,1$) by
\begin{eqnarray}
M_n  & := & \{ d_{n-1}, d_n \},  \\
M_{i} &:= & M_{i+1} \cup \{ q \wedge z_i \mid q \in [d_i, z'_i] \cap M_{i+1}: \mbox{$(q,z_i)$ is a modular pair} \}.
\end{eqnarray}

\begin{Cor}\label{cor:M1}
	$\mConv({C} \cup {D})$ is equal to $M_1$ and is join-closed.
\end{Cor}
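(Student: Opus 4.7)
The plan is to establish the corollary by downward induction on $i \in \{n, n-1, \ldots, 1\}$, showing the two-fold statement that $M_i = \mConv((C \vee d_{i-1}) \cup (D \vee d_{i-1}))$ and that $M_i$ is join-closed. Specializing to $i = 1$ yields the corollary, since $d_0 = 0$ gives $C \vee d_0 = C$ and $D \vee d_0 = D$.

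For the base case $i = n$, the interval $[d_{n-1}, 1]$ has rank one, so both $C \vee d_{n-1}$ and $D \vee d_{n-1}$ collapse to the two-element flag $\{d_{n-1}, d_n\}$, which is trivially join-closed and modular convex; hence $M_n = \{d_{n-1}, d_n\}$ is the desired modular convex hull.

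For the inductive step, I would apply Lemma \ref{lem:1} inside the semimodular sublattice $[d_{i-1}, 1]$ to the two flags $C \vee d_{i-1}$ and $D \vee d_{i-1}$, with $d_i$ playing the role previously played by $d_1$. The key verification is that the ``$k$'' of Lemma \ref{lem:1} corresponds to $\sigma(i) - 1$: by definition of the Jordan--H\"older permutation, $\sigma(i) - 1$ is the largest $j$ with $c_j \vee d_{i-1} \not\geq d_i$, so the lemma's ``$c_k$'' becomes $c_{\sigma(i)-1} \vee d_{i-1} = z_i$ and its ``$c_{k+1}$'' becomes $c_{\sigma(i)} \vee d_{i-1} = z_i'$. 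Lemma \ref{lem:1} then delivers the identity
$$\mConv((C \vee d_{i-1}) \cup (D \vee d_{i-1})) = \mConv((C \vee d_i) \cup (D \vee d_i)) \cup \Delta_i,$$
together with join-closedness, where $\Delta_i$ is precisely the set of meets $q \wedge z_i$ appearing in the recursive definition of $M_i$. Invoking the inductive hypothesis to rewrite the right-hand modular convex hull as $M_{i+1}$ produces $M_i$ and closes the induction.

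The only non-routine piece is the bookkeeping that the setup of Lemma \ref{lem:1} transfers correctly through the principal filter $[d_{i-1}, 1]$: semimodularity is automatically inherited on such an interval, and one must check that $\sigma(i) - 1$ is indeed the largest index whose corresponding element of $C \vee d_{i-1}$ fails to dominate $d_i$, so that $z_i$ and $z_i'$ genuinely play the roles of $c_k$ and $c_{k+1}$. Once this correspondence is verified, the corollary follows purely formally by iterating Lemma \ref{lem:1}.
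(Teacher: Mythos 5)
Your proposal is correct and follows essentially the same route the paper intends: iterate Lemma~\ref{lem:1} downward through the filters $[d_{i-1},1]$, identifying the lemma's $c_k$ and $c_{k+1}$ with $z_i$ and $z_i'$ via $k=\sigma(i)-1$, with base case $M_n=\{d_{n-1},d_n\}$. The bookkeeping you flag (that $\sigma(i)-1$ is the largest $j$ with $c_j\vee d_{i-1}\not\geq d_i$) is exactly the observation the paper records just before defining $M_i$, so nothing is missing.
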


Define $\varphi:\mConv({C} \cup {D}) \to 2^{[n]}$ by
\begin{equation}
\varphi(u) := \{ i \in [n] \mid z_i \geq u \}.
\end{equation}
\begin{Lem}\label{lem:varphi} The map
	$\varphi$ is an embedding 
	from $\mConv({C} \cup {D})$ to $2^{[n]}$ such that for $u,v \in \mConv({C} \cup {D})$,
	\begin{itemize}
		\item[{\rm (1)}] $r(u) = n - |\varphi(u)|$, and
		\item[{\rm (2)}] $\varphi(u \vee v) = \varphi(u) \cap \varphi(v)$.
	\end{itemize}
\end{Lem}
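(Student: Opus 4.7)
The plan is to first observe that property (2) is immediate from the definition: $z_i \geq u \vee v$ if and only if $z_i \geq u$ and $z_i \geq v$. Granted (1) and (2), injectivity is a standard consequence: if $\varphi(u) = \varphi(v)$, then (2) gives $\varphi(u \vee v) = \varphi(u)$, whence (1) forces $r(u \vee v) = r(u)$ and thus $u = u \vee v = v$. The same manipulation shows $u \leq v$ if and only if $\varphi(v) \subseteq \varphi(u)$, so $\varphi$ is an order-reversing embedding of $\mConv(C \cup D)$ into $2^{[n]}$. The content of the lemma therefore reduces to (1).

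I would next record the sub-fact $\varphi(d_m) = \{j \in [n] : j > m\}$: for $j > m$ we have $z_j \geq d_{j-1} \geq d_m$, while for $j \leq m$ the minimality in the definition of $\sigma(j)$ forces $d_j \not\leq z_j$, and $d_m \geq d_j$ then yields $d_m \not\leq z_j$. This handles the base case $M_n = \{d_{n-1}, 1\}$ of a downward induction on $i$ using Corollary~\ref{cor:M1} that $\mConv(C \cup D) = M_1$; note $\varphi(1) = \emptyset$ since $z_i < z_i' \leq 1$ for every $i$.

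For the inductive step, take a new element $u = q \wedge z_i \in M_i \setminus M_{i+1}$ with $q \in [d_i, z_i'] \cap M_{i+1}$ and $(q, z_i)$ modular. Two preparatory identities are needed. First, $r(u) = r(q) - 1$: since $q \geq d_i$ but $z_i \not\geq d_i$, we have $q \not\leq z_i$, so $q \vee z_i > z_i$, and combined with $q \vee z_i \leq z_i'$ together with the fact that $z_i'$ covers $z_i$, this forces $q \vee z_i = z_i'$; modularity then yields $r(u) = r(q) + r(z_i) - r(z_i') = r(q) - 1$. Second, $q = u \vee d_i$: applying M-symmetry of $(q, z_i)$ with $x = d_i \leq q$ gives $(d_i \vee z_i) \wedge q = d_i \vee u$; on the other hand $d_i \vee z_i = d_i \vee c_{\sigma(i)-1}$ lies in $[z_i, z_i']$ and is not $z_i$ (since $d_i \not\leq z_i$), so $d_i \vee z_i = z_i'$, and hence $(d_i \vee z_i) \wedge q = z_i' \wedge q = q$.

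The key claim $\varphi(u) = \varphi(q) \sqcup \{i\}$ then follows by examining each index $j$. The inclusion $\supseteq$ is clear from $u \leq q$ and $u \leq z_i$, and $i \notin \varphi(q)$ because $z_i \not\geq d_i$ while $q \geq d_i$. For the reverse inclusion: when $j < i$, the sub-fact gives $z_j \not\geq d_{i-1}$, and since $u \geq d_{i-1}$ (because $M_i \subseteq [d_{i-1}, 1]$) we obtain $z_j \not\geq u$, so $j \notin \varphi(u)$; when $j > i$, $z_j \geq d_{j-1} \geq d_i$, and $z_j \geq u$ combined with $z_j \geq d_i$ gives $z_j \geq u \vee d_i = q$, placing $j$ in $\varphi(q)$. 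Together with the inductive hypothesis $|\varphi(q)| = n - r(q)$ and $r(u) = r(q) - 1$, this yields $|\varphi(u)| = n - r(u)$. The main obstacle is the identity $q = u \vee d_i$: it is what couples the new element tightly enough to the existing one for the inductive rank equality to propagate, and its proof genuinely depends on the combination of M-symmetry with the cover $z_i < z_i'$ supplied by semimodularity.
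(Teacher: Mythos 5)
Your proof is correct and follows essentially the same route as the paper: a downward induction over the sets $M_i$, establishing $r(u)=r(q)-1$ and $\varphi(u)=\varphi(q)\cup\{i\}$ for each new element $u=q\wedge z_i\in M_i\setminus M_{i+1}$. The only differences are cosmetic---you derive injectivity from (1) and (2) rather than from the identity $u=\bigwedge_{i\in\varphi(u)}z_i$ as the paper does, and you supply the verifications (notably $q=u\vee d_i$ via M-symmetry and the cover $z_i<z_i'$) that the paper compresses into ``it is clear.''
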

\begin{proof}
	Let $u \in \mConv({C} \cup {D})$.
	Since $M_1 = \mConv({C} \cup {D})$, 
	$u$ is the meet of a subset of $\{z_i\}$.
	Therefore $u = \bigwedge_{i \in \varphi(u)} z_i$, and $\varphi$ is injective. 
	Suppose that $u \in M_{i} \setminus M_{i+1}$.
	Then $u = q \wedge z_i$ for some $q \in [d_i,z_i'] \cap M_{i+1}$.
	It is clear that $r(u) = r(q) - 1$ 
	and $\varphi(u) = \varphi(q) \cup \{ z_i\}$ with $z_i \not \in \varphi(q)$. 
	By induction on $i$ from $n$, 
    we have (1), while (2) is obvious from the definition of $\varphi$.
\end{proof}

\begin{proof}[Proof of Theorem~\ref{thm:main}~(1)]
	Let $\bar \varphi: \mConv({C} \cup {D}) \to 2^{[n]}$ by $u \mapsto [n] \setminus \varphi(u)$.
	Then $\mConv({C} \cup {D})$ is isomorphic to the image of $\bar \varphi$, which is union-closed (A2) and 
	contains $\emptyset, [n]$ (A1).
	By Lemma~\ref{lem:varphi}~(1), 
	the image of ${C}$ (or $D$) is a flag required by (A3$'$).
	Therefore, the injective image is a pre-antimatroid.
\end{proof}

Next we show Theorem~\ref{thm:main}~(2).
We note that the flag-distance $\dist(C,D)$ is computed from Jordan-H\"older permutation $\sigma_{C,D}$.
For a permutation $\sigma$ on $[n]$, the {\em inversion number} of $\sigma$
is defined as the number of pairs 
$i,j \in [n]$ with $i<j$ and $\sigma(i) > \sigma(j)$.
\begin{Thm}[{\cite{AbelsA}; see \cite{Herscovici98}}]\label{thm:inversion}
	For flags $C,D$ in a semimodular lattice, 
	the flag-distance
	$\dist({C},{D})$ is equal to 
	the inversion number of $\sigma_{{C},{D}}$.
\end{Thm}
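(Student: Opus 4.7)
The plan is to prove both $\dist(C,D) \geq \operatorname{inv}(\sigma_{C,D})$ and $\dist(C,D) \leq \operatorname{inv}(\sigma_{C,D})$ via a single technical tool: a local lemma describing how the Jordan-H\"older permutation changes under an adjacency move on one of the flags. The local lemma states that if $F, F'$ are flags adjacent in position $i$ (so $f_j = f'_j$ for $j \neq i$ and $f_{i-1} < f_i, f'_i < f_{i+1}$), then $\sigma_{F',D}$ and $\sigma_{F,D}$ differ by a single adjacent transposition $s_k = (k,k+1)$; in particular, their inversion numbers differ by exactly one.

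To establish the local lemma, I would observe that in $\sigma_{F,D}(m) = \min\{j : d_m \leq d_{m-1} \vee f_j\}$, only the term with $j=i$ is affected by the swap $f_i \leftrightarrow f'_i$. Using semimodularity, specifically $f_i \vee f'_i = f_{i+1}$ and $f_i \wedge f'_i = f_{i-1}$, together with the diamond behavior of $d_{m-1} \vee f_i$ versus $d_{m-1} \vee f'_i$ inside the interval $[d_{m-1} \vee f_{i-1}, d_{m-1} \vee f_{i+1}]$, one identifies the exactly two consecutive indices $m, m+1$ at which the $\sigma$-values interchange, and reads off $k$ from those; all other values are unchanged.

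For the lower bound, let $C = F^0, F^1, \ldots, F^m = D$ be any gallery. The local lemma applied to each consecutive pair gives $|\operatorname{inv}(\sigma_{F^t,D}) - \operatorname{inv}(\sigma_{F^{t+1},D})| = 1$, and since $\sigma_{D,D} = \mathrm{id}$ has inversion number zero, telescoping yields $m \geq \operatorname{inv}(\sigma_{C,D})$. For the upper bound, I would proceed by induction on $\operatorname{inv}(\sigma_{C,D})$. The base case $\operatorname{inv} = 0$ forces $\sigma_{C,D} = \mathrm{id}$; the defining conditions $d_i \leq d_{i-1} \vee c_i$ and $d_i \not\leq d_{i-1} \vee c_{i-1}$ for every $i$ then give $c_i = d_i$ inductively, so $C=D$. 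For the inductive step, pick an adjacent inversion $\sigma(k) > \sigma(k+1)$; reversing the local lemma, construct an adjacent flag $C'$ with $\sigma_{C',D}$ having one fewer inversion, and apply induction to obtain a gallery from $C'$ to $D$ of length $\operatorname{inv}(\sigma_{C,D})-1$, which extends via the single step $C \to C'$ to a gallery from $C$ to $D$ of length $\operatorname{inv}(\sigma_{C,D})$.

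The main obstacle is the local lemma: one must not only bound the change in inversion number by $1$ but also precisely identify which simple transposition is realized, so that the inductive construction in the upper bound targets the correct position. The existence of the requisite $c'_i$ in the upper-bound step is the converse challenge---it amounts to producing a new atom in the rank-two interval $[c_{i-1}, c_{i+1}]$ with the right behavior relative to $D$---and both hinge on a careful use of the equality case of the semimodular inequality for the pair $(f_i, f'_i)$ and the resulting diamond in $[f_{i-1}, f_{i+1}]$.
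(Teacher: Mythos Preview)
Your strategy is sound and is essentially the reduced-word approach of Abels/Herscovici, but the paper's sketch takes a different route for the upper bound $\dist\leq\operatorname{inv}$. Rather than inducting on the inversion number and building the gallery one adjacent move at a time, the paper inducts on the rank $n$: setting $C':=C\vee d_1$ (a flag in the interval $[d_1,1]$) and $k:=\sigma_{C,D}(1)-1$, one has $\operatorname{inv}(\sigma_{C,D})=k+\operatorname{inv}(\sigma_{C',D'})$, so concatenating the explicit length-$k$ gallery from $C$ to $C'\cup\{0\}$ displayed in the paper with the inductive gallery inside $[d_1,1]$ yields the bound directly. That recursive construction is tailored to the paper's framework (it reuses the decomposition behind Lemma~\ref{lem:1}), whereas yours is more symmetric but obliges you at each step to manufacture the new element $c'_i$ in the rank-two interval $[c_{i-1},c_{i+1}]$, which you rightly flag as the delicate point. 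One small correction to your local lemma: when you move the \emph{first} flag at position $i$, the effect on $\sigma_{F,D}$ is left-multiplication by $s_i$ (the \emph{values} $i$ and $i+1$ swap), so the two affected positions $\sigma^{-1}(i),\sigma^{-1}(i+1)$ need not be consecutive; this is harmless for the telescoping lower bound, but in the inductive step you must pick $i$ with $\sigma^{-1}(i)>\sigma^{-1}(i+1)$ rather than a descent $\sigma(k)>\sigma(k+1)$ at adjacent positions.
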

\begin{proof}[Sketch of Proof]
It is clear that the inversion number gives 
an upper bound of the flag-distance.
In the setting of the proof of Lemma~\ref{lem:1}, 
the inversion number of $\sigma_{{C},{D}}$
is $k$ plus the inversion number $\ell$ of $\sigma_{{C}',{D}'}$. 
By induction, ${C}' \cup \{0\}$ and ${D}$ 
are connected by a gallery of length $\ell$.
Also ${C'} \cup \{0\}$ and ${C}$
are connected by the gallery of length $k$, 
which consists of $k+1$ flags 
\[
0 = c_0 < c_1 < \cdots c_{i} < c_i \vee d_1 < \cdots < c_{k} \vee d_1 = c_{k+1} < \cdots < c_n = 1
\]
for $i=0,1,\ldots,k$.
\end{proof}

\begin{proof}[Proof of Theorem~\ref{thm:main}~(2)]
We first show that any flag in any shortest gallery belongs to $\mConv ({C} \cup {D})$.
Choose any flag ${D}'$ in a shortest gallery such that it is adjacent to ${D}$.
It suffices to show that ${D}' \subseteq \mConv ({C} \cup {D})$.
Here ${D}'$ is given by 
\[
0 = d_0 < d_1 < \cdots < d_{i-1} < d_{i}' < d_{i+1} < \cdots < d_{n} = 1
\]	
for $d'_i \neq d_i$.
By Theorem~\ref{thm:inversion}, it holds
$j:= \sigma_{{C}, {D}}(i) > \sigma_{{C}, {D}}(i+1) =:j'$, and $j= \sigma_{{C}, {D}'}(i+1)$.
Then $d_{i+1} \leq d_{i} \vee c_{j'} \leq d_{i} \vee c_{j}$, and $d_i \vee c_j = d_{i+1} \vee c_j$.
Both $d_{i-1}\vee c_{j-1}$ and $d_i' \vee c_{j-1}$ 
are covered by $d_{i+1} \vee c_j= d_i \vee c_j$. 
Necessarily, they are equal.
Now $d_{i+1}$ and $d_{i-1}\vee c_{j-1}$ form a modular pair such that their meet is $d'_i$.
Also $\mConv ({C}, {D})$ contains $d_{i-1}\vee c_{j-1}$ since it is join-closed (Corollary~\ref{cor:M1}). This concludes $d_i' \in \mConv ({C}, {D})$.
 
Next we show the conversion direction.
Regard $\mConv ({C} \cup {D})$ as a pre-antimatroid on $[n]$.
Then the cardinality $|X|$ for $X \in \mConv ({C} \cup {D})$ equals the rank $r(X)$ in $L$.
By this fact and the join-closedness, 
if $X,Y \in \mConv ({C} \cup {D})$  are moduler in $L$, 
then $X \cup Y, X \cap Y \in  \mConv ({C} \cup {D})$.
Therefore, $\mConv  ({C} \cup {D})$ is a subset of 
the sublattice generated by $C,D$ in the Boolean lattice $2^{[n]}$.
Abels~\cite[Proposition 1.8]{AbelsB} showed that a flag in 
this sublattice is precisely a flag in 
a shortest gallery between $C$ and $D$ in $2^{[n]}$.
On the other hand,  the antimatroid ${\cal A}:= (\mConv ({C} \cup {D}))^* \subseteq 2^{[n]}$ 
is a semimodular lattice 
with respect to the inclusion order, 
where the join and the rank are the same as in $L$ and as in $2^{[n]}$.
Consequently, the Jordan-H\"older permutation $\sigma_{{C},{D}}$ in ${\cal A}$
is the same as in $L$ and as in $2^{[n]}$.
Together with Theorem~\ref{thm:inversion}, 
we conclude that any flag in $\mConv({C}\cup {D})$ belongs to a shortest gallery in $L$.
\end{proof}

\section*{Acknowledgments}
The first author was supported by Grant-in-Aid for JSPS Research Fellow, Grant No. JP19J22605, Japan.
The second author was supported by JST PRESTO Grant Number JPMJPR192A, Japan.

\end{document}